\title[Balanced metrics on six-dimensional manifolds of cohomogeneity one]{On the existence of balanced metrics on six-manifolds of cohomogeneity one}
\author{Izar Alonso}
\address[I. Alonso]{Department of Mathematics \\ Rutgers University \\Piscataway, NJ 08854\\ USA}
\email{izar.alonso@rutgers.edu}
\author{Francesca Salvatore}
\address[F. Salvatore]{Dipartimento di Matematica ``G. Peano'' \\ Universit\`a degli Studi di Torino\\ Via Carlo Alberto 10\\10123 Turin\\ Italy}
\email{francesca.salvato92@gmail.com}
\subjclass[2010]{53C25}
\keywords{Cohomogeneity one actions, Balanced metrics, $\text{SU}(3)$-structures}
\thanks{The first author was supported by the EPSRC.
The second author was supported by GNSAGA of INdAM}
\newcommand\xrowht[2][0]{\addstackgap[.5\dimexpr#2\relax]{\vphantom{#1}}}
\theoremstyle{plain}
\newtheorem{theorem}{Theorem}[section]
\newtheorem*{theorem*}{Theorem}
\newtheorem*{theoremA}{Theorem A}
\newtheorem*{theoremB}{Theorem B}
\newtheorem{corollary}[theorem]{Corollary}
\newtheorem{proposition}[theorem]{Proposition}
\theoremstyle{definition}
\newtheorem{definition}[theorem]{Definition}
\theoremstyle{remark}
\newtheorem{remark}[theorem]{Remark}
\numberwithin{equation}{section} \setcounter{tocdepth}{1}
\newcommand\norm[1]{\left\lVert#1\right\rVert}
\newcommand\widehatt[1]{%
\savestack{\tmpbox}{\stretchto{%
  \scaleto{%
    \scalerel*[\widthof{\ensuremath{#1}}]{\kern-.6pt\bigwedge\kern-.6pt}%
    {\rule[-\textheight/2]{1ex}{\textheight}}%WIDTH-LIMITED BIG WEDGE
  }{\textheight}% 
}{0.5ex}}%
\stackon[1pt]{#1}{\tmpbox}%
}
\begin{document}

\begin{abstract}
We consider balanced metrics on complex manifolds with holomorphically trivial canonical bundle, most commonly known as balanced \text{SU}(n)-structures. 
Such structures are of interest for both Hermitian geometry and string theory, since they provide the ideal setting for the Hull--Strominger system. 
In this paper, we provide a non-existence result for balanced non-K\"ahler $\text{SU}(3)$-structures which are invariant under a cohomogeneity one action on  simply connected six-manifolds.
\end{abstract}
\maketitle
\section{Introduction}
A $\text{U}(n)$-structure on a $2n$-dimensional smooth manifold $M$ is the data of a Riemannian metric $g$ and a $g$-orthogonal almost complex structure $J$. The pair $\left(g,J\right)$ is also known as an almost Hermitian structure on $M$.
When $J$ is integrable, i.e., $\left(M,J\right)$ is a complex manifold, the pair $\left(g,J\right)$ defines a Hermitian structure on $M$.
In this case, the metric $g$ is  called \emph{balanced} when ${\text{d}}\omega^{n-1}=0$, $\omega\coloneqq g\left(J\cdot,\cdot\right)$ denoting the associated fundamental form, and we shall refer to $\left(g,J\right)$ as a balanced $\text{U}(n)$-structure on $M$.
Balanced metrics have been extensively studied in \cite{BedVezz, FinoGranVezz, FinoVezz1, FinoVezz2, FuLiYau, Mic82, PhongPicardZhang} (see also the references therein). 

Balanced metrics are also interesting in the context of $\text{SU}(n)$-structures, especially in the six-dimensional case, thanks to their applications in physics.
An $\text{SU}(n)$-structure $\left(g,J,\Psi\right)$ on a $2n$-dimensional smooth manifold $M$, is a $\text{U}(n)$-structure $\left(g,J\right)$ on $M$ together with a $\left(n,0\right)$-form of nonzero constant norm $\Psi=\psi_+ + i\psi_-$ satisfying the normalization condition $\Psi \wedge \overline{\Psi}=(-1)^{\frac{n(n+1)}{2}}(2i)^n \frac{\omega^n}{n!}$.
An $\text{SU}(n)$-structure $\left(g,J,\Psi\right)$ on $M$ with underlying balanced $\text{U}(n)$-structure $\left(g,J\right)$ for which ${\text{d}}\omega\neq 0$ and ${\text{d}}\Psi=0$ will be referred to as a balanced non-K\"ahler $\text{SU}(n)$-structure.

In 1986, Hull and Strominger \cite{Hull, Strominger}, 
independently, introduced a system of \textsc{pde}s, now known as the \emph{Hull--Strominger system}, to formalize certain properties of the inner space model used in string theory.
Let $M$ be a $2n$-dimensional complex manifold equipped with a nowhere-vanishing holomorphic $\left(n,0\right)$-form $\Psi$ and let $E$ be a holomorphic vector bundle on $M$ endowed with the Chern connection. 
The  Hull--Strominger system consists of a set of \textsc{pde}s involving a pair of Hermitian metrics $\left(g,h\right)$  on $\left(M,E\right)$.
One of these equations dictates the metric $g$ on $M$ to be conformally balanced, more precisely ${\text{d}}\left(\norm{\Psi}_{\omega}\omega^{n-1}\right)=0$, where $\norm{\Psi}_{\omega}$ is the norm of $\Psi$ given explicitly by $\Psi \wedge \overline{\Psi}=(-1)^{\frac{n(n+1)}{2}}\frac{i^n}{n!}  \norm{\Psi}_{\omega}^2  \omega^n$. When one assumes all structures to be invariant under the smooth action of a certain Lie group $G$, the aforementioned condition reduces to the balanced equation ${\text{d}}\omega^{n-1}=0$, since the norm of $\Psi$ is constant.
Notice that in these cases $(g,J,\Psi)$ is a balanced $\text{SU}(n)$-structure on $M$, up to a suitable uniform scaling of $\Psi$. 

The issue of the existence and uniqueness of a general solution to the Hull--Strominger system is still an open problem. Nonetheless,  solutions have been found under more restrictive hypotheses; for the non-K\"ahlerian case, we refer the reader, for instance, to \cite{Fei, FeiHuangPicard, FuTsengYau, FuYau, Mario, Gran11, OtalUgaVilla}.
Other interesting solutions are given in \cite{FeiYau}, where a class of invariant solutions to the Hull--Strominger system on complex Lie groups was provided; these solutions extend to solutions on all compact complex parallelizable manifolds, by Wang's classification theorem \cite{Wang}. 
Moreover, in \cite{FinoGranVezz}, it was shown that a compact complex homogeneous space with invariant complex volume admitting a balanced metric is necessarily a complex parallelizable manifold. 
Then, the invariant solutions given in \cite{FeiYau} exhaust the complex compact homogeneous case. If one allows the Lie group acting on the homogeneous space to be real, many other solutions to the Hull--Strominger system are known in the literature, see for instance \cite{ Gran11, Pujia, PujiaUgarte, UgaVilla}. Then, one may wonder what happens in the cohomogeneity one case. 
A cohomogeneity one manifold $M$ is a connected smooth manifold with an action of a compact Lie group $G$ having an orbit of codimension one. 
Currently, there are no known examples of balanced non-K\"ahler $\text{SU}(n)$-structures invariant under a cohomogeneity one action. In this paper, we investigate their existence. 
In particular, we focus on the 
simply connected $2n=6$-dimensional case.
Recall that, when a cohomogeneity one manifold $M$ has finite fundamental group, then $M/G$ is homeomorphic to an interval $I$, see \cite{BerBer}. If we denote by $\pi:M\rightarrow M/G$ the canonical projection onto the orbit space,
we shall call $\pi^{-1}(t)$,  for every $ t\in \overset{\circ}{I}$, principal
orbits and the inverse images of the boundary points singular orbits. Denoting by $M^{\text{princ}}$ the union of all principal orbits, which is a dense open subset of $M$, and by $K$ the isotropy group of a principal point, which is unique up to conjugation along $M^{\text{princ}}$, the pair $\left(G,K\right)$ completely determines the principal part $M^{\text{princ}}$ of the cohomogeneity one manifold, up to $G$-equivariant diffeomorphisms.
Given a Lie group $H$, we denote its Lie algebra $\text{Lie}(H)$ by the corresponding gothic letter $\mathfrak{h}$. 

We first give a local result for the existence of balanced non-K\"ahler $\text{SU}(3)$-structures by working on $M^{\text{princ}}$.

\begin{theoremA}
Let $M$ be a six-dimensional simply connected cohomogeneity one manifold under the almost effective action of a connected Lie group $G$, and let $K$ be the principal isotropy group. Then, the principal part $M^{\text{\normalfont princ}}$ admits a $G$-invariant balanced non-K\"ahler $\text{\normalfont SU}(3)$-structure $\left(g,J,\Psi\right)$ if and only if 
$\left( \mathfrak{g}, \mathfrak{k} \right)=\left(\mathfrak{su}(2)\oplus\mathfrak{su}(2), \Delta \mathbb{R} \right)$ 
or
$M$ is compact and $\left( \mathfrak{g}, \mathfrak{k} \right)=\left(\mathfrak{su}(2)\oplus 2\mathbb{R},\{0\} \right)$.
\end{theoremA}
We then prove that, under the assumption that $(\mathfrak{g}, \mathfrak{k}) \neq (\mathfrak{su}(2) \oplus \mathfrak{su}(2), \Delta \mathbb{R})$, the local solutions cannot be extended to a global one.
This leads us to state our main theorem:
\begin{theoremB}
Let $M$ be a six-dimensional simply connected cohomogeneity one manifold under the almost effective action of a connected Lie group $G$, and let $K$ be the principal isotropy group. Assume $(\mathfrak{g}, \mathfrak{k}) \neq (\mathfrak{su}(2) \oplus \mathfrak{su}(2), \Delta \mathbb{R})$.
Then $M$ admits no $G$-invariant balanced non-K\"ahler $\text{\normalfont SU}(3)$-structures.
\end{theoremB}

In \cite{FuLiYau}, balanced metrics were constructed on the connected sum of $k \geq 2$ copies of $S^3 \times S^3$. However, it is not known whether $S^3 \times S^3$ admits  balanced  structures. 
In \cite[Example 1.8]{Mic82}, Michelsohn proved that $S^3\times S^3$ endowed with the Calabi--Eckmann complex structure does not admit any compatible balanced metric.
By \cite[Remark 1]{Alexandrov00}, in a manifold with six real dimensions,  there is no non-K\"ahler Hermitian metric which is simultaneously balanced and strong K\"ahler-with-torsion (a.k.a SKT). 
In \cite{FinoVezz1}, Fino and Vezzoni conjectured that on non-K\"ahler compact complex manifolds it is never possible to find
an SKT metric and also a balanced metric.
In \cite{Grantcharov08}, an example of a SKT structure on $S^3 \times S^3$ is provided.
The key case that needs to be tackled in Theorem B is precisely $S^3 \times S^3$.

The paper is organized as follows. 
In Sect.\ $2$, we review some basic facts about cohomogeneity one manifolds and $\text{SU}(3)$-structures which will be useful for our discussion. In Sect.\ $3$, we present our problem,
write a classification of the pairs $(\mathfrak{g},\mathfrak{k})$ that can occur, and use the hypothesis of simply connectedness to reduce the list to only three possibilities.
At the end of Sect.\ $3$, we state Theorem A, which we prove in Sect.\ $4$ via a case-by-case analysis.
Finally, in Sect.\ $5$, we prove Theorem B.

\vspace{12pt}

\noindent
{\bf Acknowledgements.}
The first named author wants to thank Andrew Dancer and Jason Lotay for introducing the problem and their support and help with it. The first named author would also like to thank Johannes Nordstr\"om for bringing up his concerns about equation (4.5) in the previous version of this document.
The second named author would like to thank Lucio Bedulli for introducing the problem and for useful conversations and comments and Anna Fino for her constant support, encouragement and patient guidance. The second named author would like to thank also Alberto Raffero and Fabio Podest\`{a}  for helpful discussions and remarks.

\section{Preliminary notions}
\subsection{Cohomogeneity one manifolds}

Here, we recall the basic structure of cohomogeneity one manifolds. For further details, see for instance \cite{AleBettiol, BerBer, Hoelscher, Hoelscher2, Ziller}.
\begin{definition}
A cohomogeneity one manifold is a connected differentiable manifold $M$ with an action $\alpha\colon G\times M\to M$ of a compact Lie group $G$ having an orbit of codimension one.
We denote by $\tilde{\alpha}\colon G \to \text{Diff}\left(M\right) $ the Lie group homomorphism induced by the action.
\end{definition}

From now on, let us assume that $M$ is a simply connected cohomogeneity one manifold, and $G$ is connected. 
By the compactness of $G$, the action $\alpha$ is proper and there exists a $G$-invariant  Riemannian metric $g$ on $M$; this is equivalent to saying that $G$ acts on the Riemannian manifold $\left(M,g\right)$ by isometries. 
Moreover, we assume that the action $\alpha$ is almost effective, namely $\text{ker}\, \tilde{\alpha}$ is discrete. 
As usual, we denote by $\pi\colon M\to M/G$ the canonical projection and we equip $M/G$ with the quotient topology relative to $\pi$.
By a result of B\'{e}rard Bergery \cite{BerBer}, the quotient space $M/G$ is homeomorphic to a circle or an interval. As we are assuming that $M$ is simply connected, we have that $M/G$ is homeomorphic to an interval $I$.
The inverse images of the interior points of the orbit space $M/G$ are known as \emph{principal orbits}, while the inverse images of the boundary points are called \emph{singular orbits}. We denote by $M^{\text{princ}}$ the union of all principal orbits, which is an open dense subset of $M$, and by $G_p$ the isotropy group at $p\in M$.

First, we will suppose $M$ is compact. It follows that $M/G$ is homeomorphic to the closed interval $I=[-1,1]$.
Denote by $\mathcal{O}_1$ and $\mathcal{O}_2$ the two singular orbits $\pi^{-1}\left(-1\right)$ and $\pi^{-1}\left(1\right)$, respectively, and fix $q_1 \in \mathcal{O}_1$.
By compactness of the $G$-orbits, there exists a minimizing geodesic $\gamma_{q_1}\colon [-1,1]\to M$ from $q_1$ to $\mathcal{O}_2$ which is orthogonal to every principal orbit.
We call a \emph{normal geodesic} a geodesic orthogonal to every principal orbit.
Let $\gamma\colon [-1,1]\to M$ be a normal geodesic between $\pi^{-1}\left(-1\right)$ and $\pi^{-1}\left(1\right)$; up to rescaling, we can always suppose that the orbit space $M/G$ is such that $\pi \circ \gamma =\text{Id}_{[-1,1]}$.
Then, by Kleiner's Lemma, there exists a subgroup $K$ of $G$ such that $G_{\gamma\left(t\right)}=K$ for all $t\in (-1,1)$ and $K$ is subgroup of $G_{\gamma\left(-1\right)}$ and $G_{\gamma\left(1\right)}$.

For $M$ non-compact, $M/G$ is homeomorphic either to an open interval or to an interval with a closed end.
In the former case, $M$ is a product manifold $M \cong I \times G/K$.
In the latter case, there exists exactly one singular orbit, and $M/G \cong I$ where $I=[0, L)$ and $L$ is either a positive number or $+\infty$. 
Analogously to the compact case, there exists a normal geodesic
$\gamma: [0, L) \rightarrow M$ such that $\gamma(0) \in \pi^{-1}(0)$ and we can suppose $\pi \circ \gamma =\text{Id}_{[0,L)}$. In addition, there exists a subgroup $K$ of $G$ such that $G_{\gamma(t)}=K$ for all $t \in (0,L)$ and if $H \coloneqq G_{\gamma(0)}$, $K$ is a subgroup of $H$. 

So we have that:
\begin{itemize}
\item $\pi^{-1}\left(t\right)\cong G/K$ for all $t\in \overset{\circ}{I}$,
\item $M^{\text{princ}} = \bigcup_{t\in \overset{\circ}{I}}\pi^{-1}\left(t\right)=\bigcup_{t\in \overset{\circ}{I}} G\cdot \gamma\left(t\right)$,
\item  for every $p_1, p_2\in M^{\text{princ}}$, $G\cdot p_1$ and $G\cdot p_2$ are diffeomorphic.
\end{itemize}
Therefore, up to conjugation along the orbits, when $M$ is compact we have three possible isotropy groups $H_1\coloneqq G_{\gamma\left(-1\right)}$, 
$H_2 \coloneqq G_{\gamma\left(1\right)}$ and $K\coloneqq G_{\gamma\left(t\right)}$, $t\in\left(-1,1\right)$. 
When $M$ is non-compact and has one singular orbit, instead,
we have two possible isotropy groups $H \coloneqq G_{\gamma\left(0\right)}$ and $K\coloneqq G_{\gamma\left(t\right)}$, $t\in\left(0,L\right)$.
From all of the above, we have that
\[ M^{\text{princ}} \cong \overset{\circ}{I} \times G/K, 
\] and so, by fixing a suitable global coordinate system, we can decompose the $G$-invariant metric $g$ as
\begin{equation} \label{metric}
g_{\gamma(t)}={\text{d}}t^2 + g_t,
\end{equation}
where ${\text{d}}t^2$ is the $(0,2)$-tensor corresponding to the vector field $\xi \coloneqq \gamma'\left(t\right)$ evaluated at the point $\gamma\left(t\right)$, and $g_t$ is a $G$-invariant metric on the homogeneous orbit $G\cdot \gamma\left(t\right)$ through the point $\gamma\left(t\right)\in M$. 

Now, we will assume $M$ is compact. By the density of $M^{\text{princ}}$ in $M$ and the Tube Theorem, $M$ is homotopically equivalent to
\begin{equation}
\left( G \times_{H_1} S_{\gamma\left(-1\right)}\right)\cup_{G/K}\left( G \times_{H_2} S_{\gamma\left(1\right)}\right),
\end{equation}
where the geodesic balls $ S_{\gamma\left(\pm 1\right)}\coloneqq \text{exp}\left(B_{\varepsilon^{\pm}}\left(0\right)\right)$, 
$B_{\varepsilon^{\pm}}\left(0\right)\subset T_{\gamma\left(\pm 1\right)}\left(G\cdot \gamma\left(\pm 1\right) \right)^{\perp}$, are normal slices to the singular orbits in $\gamma\left(\pm 1\right)$. 
Here, $ G \times_{H_i} S_{\gamma\left(\pm 1\right)}$ is the associated fiber bundle to the principal bundle $G \to G/H_i$ with type fiber $S_{\gamma\left(\pm 1\right)}$.
By Bochner's linearization theorem, $M$ is also homotopically equivalent to 
\begin{equation}\label{DecompCohom1}
\left( G \times_{H_1} B_{\varepsilon^{-}}\left(0\right)\right)\cup_{G/K}\left( G \times_{H_2} B_{\varepsilon^{+}}\left(0\right)\right).
\end{equation}
The isotropy groups $H_i$ act on $ B_{\varepsilon^{\pm}}\left(0\right)$ via the slice representation and,
since the boundary of the tubular neighborhood $\text{Tub}(\mathcal{O}_i) \coloneqq  G \times_{H_i} B_{\varepsilon^{\pm}}\left(0\right)$, $i=1,2$,
is identified with the principal orbit $G/K$ and the $G$-action on 
$\text{Tub}(\mathcal{O}_i) $ is identified with the $H_i$-action
on $ B_{\varepsilon^{\pm}} \left( 0 \right)$, then $H_i$ acts transitively on
the sphere $S^{l_i} \coloneqq \partial  B_{\varepsilon^{\pm}}$, $l_i>0$ still having isotropy $K$. The normal spheres $S^{l_i}$ are thus the homogeneous
spaces $H_i/K$, $i=1,2$.
The $H_i$-action on $S^{l_i}$, $i=1,2$, may be ineffective, but it is sufficient to quotient $H_i$ by the ineffective kernel to obtain an effective action: transitive effective actions of compact Lie groups on spheres were classified by Borel and are summarized in Table \ref{spheres}.
\begin{table}[H]
\begin{center}
\addtolength{\leftskip} {-2cm}
\addtolength{\rightskip}{-2cm}
\scalebox{0.70}{
\begin{tabular}{|c|c|c|c|c|c|c|c|c|c|}
\hline
\xrowht{15pt}
$H$               & $\text{SO}(n)$   & $\text{U}(n)$   & $\text{SU}(n)$   & $\text{Sp}(n)\text{Sp}(1)$   & $\text{Sp}(n)\text{U}(1)$   & $\text{Sp}(n)$   & $\text{G}_2$   & $\text{Spin}(7)$ & $\text{Spin}(9)$ \\ \hline \xrowht{15pt}
$K$               & $\text{SO}(n-1)$ & $\text{U}(n-1)$ & $\text{SU}(n-1)$ & $\text{Sp}(n-1)\text{Sp}(1)$ & $\text{Sp}(n-1)\text{U}(1)$ & $\text{Sp}(n-1)$ & $\text{SU}(3)$ & $\text{G}_2$     & $\text{Spin}(7)$ \\ \hline \xrowht{15pt}
$S^l=H / K$ & $S^{n-1}$        & \multicolumn{2}{c|}{$S^{2n-1}$}    & \multicolumn{3}{c|}{$S^{4n-1}$}                                               & $S^6$          & $S^7$            & $S^{15}$         \\ \hline
\end{tabular}}
\medskip
\caption{Transitive effective actions of compact Lie groups on spheres}
\label{spheres}
\end{center}
\end{table}
The collection of $G$ with its isotropy groups $G\supset H_1, H_2 \supset K $ is called the \emph{group diagram} of the cohomogeneity one manifold $M$.
Viceversa, let $G\supset H_1, H_2 \supset K $ be compact groups with $H_i/K \cong S^{l_i}$, $i=1,2$.
By the classification of transitive actions on spheres one has that the $H_i$-action on $S^{l_i}$ is linear and hence it can be extended
to an action on $ B_{\varepsilon^{\pm}}$ bounded by $S^{l_i}$, $i=1,2$.
Therefore, \ref{DecompCohom1} defines a cohomogeneity one manifold $M$.
Analogously, if $M$ is a non-compact cohomogeneity one manifold with one singular orbit, we define the \emph{group diagram} of $M$ to be the collection of $G$ and the isotropy groups $G \supset H \supset K$, where the homogeneous space $H/K$ will be a sphere. The converse is also true:  the group diagram defines a non-compact cohomogeneity one manifold $M$. In these cases, $M$ is homotopically equivalent to  $G\times_{H}B_{\epsilon}(0)$, where $B_{\epsilon}(0)\subseteq T_{\gamma(0)} (G\cdot \gamma(0))^{\perp}$ as before.

Let $M_i$ be cohomogeneity one manifolds with respect to the action of Lie groups $G_i$, $i=1,2$.
We say that the action of $G_1 $ on $M_1$ is equivalent to the action of $G_2$ on $M_2$ if there exists a Lie group isomorphism $\varphi\colon G_1 \to G_2$ and an equivariant diffeomorphism $f\colon M_1 \to M_2$ with respect to the isomorphism $\varphi$.
We shall study cohomogeneity one manifolds up to this type of equivalence. 

Moreover, if a cohomogeneity one manifold $M$ has group diagram $G\supset H_1, H_2 \supset K$ or $G\supset H \supset K$, one can show that any of the following operations results in a $G$-equivariantly diffeomorphic manifold:
\begin{enumerate}
\item switching $H_1$ and $H_2$,
\item conjugating each group in the diagram by the same element of $G$,
\item replacing $H_i$ (respectively $H$) with $aH_ia^{-1}$ (respectively $aHa^{-1}$) for $a\in N(K)_0$, where $N(K)_0$ is the identity component of the normalizer of $K$.
\end{enumerate}

\subsection{SU(3)-structures} 

An $\text{SU}(3)$-structure on a  six-dimensional differentiable manifold $M$ is the data of a Riemannian metric $g$, a $g$-orthogonal almost complex structure $J$, and a $(3,0)$-form of nonzero constant norm $\Psi=\psi_++i\psi_-$ satisfying  the 
normalization condition $\psi_+ \wedge \psi_-=\frac{2}{3}\omega^3$. 

Following a result obtained in \cite{Reichel} and later reformulated in \cite[Section 2]{Hitchin}, one can show that giving an $\text{SU}(3)$-structure is equivalent to giving a  pair of differential forms $\left(\omega, \psi_+ \right)\in \Lambda^2\left(M\right)\times \Lambda^3\left(M\right)$ satisfying suitable conditions. Here $\Lambda^k\left(M\right)$ denotes the space of differential forms of degree $k$ on $M$.
To see this, let us briefly recall the concept of stability in the context of vector spaces.

Let $V$ be a real six-dimensional vector space and let $\alpha$ be a $k$-form on $V$. 
We say that $\alpha$ is \emph{stable} if its orbit under the action of $\text{GL}(V)$ is open in $\Lambda^k\left(V^*\right)$.
Fix a volume form $\Omega \in\Lambda^6(V^*)$ on $V$ and consider the isomorphism $A\colon \Lambda^5(V^*)\to V\otimes\Lambda^6(V^*)$ defined for any $\alpha\in \Lambda^5(V^*)$ by $A(\alpha)=v\otimes \Omega$, where $v\in V$ is the unique vector such that $\iota_v \Omega=\alpha$; here, $\iota_v \Omega$ is the contraction of $\Omega$ by the vector $v$.
Fix a $3$-form $\psi\in \Lambda^3(V^*)$ and define 
\begin{align*}
K_\psi\colon  V &\to V\otimes \Lambda^6(V^*), \\
v  &\mapsto A(\iota_v \psi \wedge \psi)
\end{align*}
and 
\begin{align*}
P\colon \Lambda^3(V^*) & \to \Lambda^6(V^*)^{\otimes 2}, \\
 \psi & \mapsto \dfrac{1}{6} \text{tr}(K_\psi^2).
\end{align*}
Finally, we define the function $\lambda\colon \Lambda^3(V^*)\to \mathbb{R}$, $\lambda(\psi)=\iota_{\Omega\otimes \Omega}P(\psi)$. Note that $A$ and the sign of $\lambda$ do not depend on the choice of orientation.

\begin{proposition}[\cite{Hitchin, Reichel}]
Let $V$ be an oriented, six-dimensional real vector space. Then,
\begin{enumerate}[label=\text{\normalfont (\arabic*)}]
\item a 2-form $\omega\in\Lambda^2(V^*)$ is stable if and only if it is non-degenerate, i.e., $\omega^3\neq0$,
\item a 3-form $\psi\in\Lambda^3(V^*)$ is stable if and only if $\lambda(\psi)\neq 0$. 
\end{enumerate}
\end{proposition}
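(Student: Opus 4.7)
The plan for both parts is to identify the $\text{GL}(V)$-orbit of a stable form with an explicit open subset cut out by a $\text{GL}(V)$-relative invariant, and then to rule out any further open orbit via a dimension count combined with a normal-form argument. Since $\dim \text{GL}(V) = 36$, an orbit is open in $\Lambda^k(V^*)$ exactly when its stabilizer has dimension $36 - \binom{6}{k}$.

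For part (1), the locus $\{\omega : \omega^3 \neq 0\}$ is clearly open, as its complement is cut out by the polynomial equation $\omega^3 = 0$. The standard Darboux argument for alternating forms exhibits a single model $\omega_0 = e^{12}+e^{34}+e^{56}$ to which every non-degenerate $2$-form is $\text{GL}(V)$-equivalent; its stabilizer $\text{Sp}(V)$ has dimension $21$, so this orbit has dimension $15 = \dim \Lambda^2(V^*)$ and thus fills the non-degenerate locus. Conversely, the rank of a $2$-form is a $\text{GL}(V)$-invariant, so any degenerate form's orbit lies in the proper closed subvariety $\{\omega^3 = 0\}$ and therefore has empty interior.

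For part (2), I would first check that $\lambda$ is a relative invariant, $\lambda(g \cdot \psi) = (\det g)^2 \lambda(\psi)$, which is immediate from the definition of $K_\psi$ together with the naturality of $\iota_v$ and $\mathrm{tr}$ under base change. Consequently $\{\lambda > 0\}$, $\{\lambda < 0\}$ and $\{\lambda = 0\}$ are $\text{GL}(V)$-invariant, with the last being a proper closed subvariety. I would then exhibit two model stable forms,
\[
\psi_{\mathbb{R}} = e^{123} + e^{456}, \qquad \psi_{\mathbb{C}} = e^{135} - e^{146} - e^{236} - e^{245},
\]
compute $\lambda(\psi_{\mathbb{R}}) > 0$ and $\lambda(\psi_{\mathbb{C}}) < 0$ directly, and observe that their stabilizers $(\text{SL}(3,\mathbb{R}) \times \text{SL}(3,\mathbb{R})) \rtimes \mathbb{Z}/2$ and $\text{SL}(3,\mathbb{C})$ have real dimension $16$, yielding open orbits of dimension $20 = \dim \Lambda^3(V^*)$.

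The main obstacle, and the only non-formal step, is showing that every $\psi$ with $\lambda(\psi) \neq 0$ lies in one of these two orbits. Following Reichel's approach, the key observation is that, after absorbing the $\Lambda^6(V^*)$-factor by a choice of volume form and renormalizing, the square of $K_\psi$ is a positive or negative multiple of $\mathrm{Id}_V$ depending on the sign of $\lambda(\psi)$. When $\lambda(\psi) > 0$ one obtains a product structure $V = V_+ \oplus V_-$ compatible with $\psi$, and adapted bases on $V_\pm$ bring $\psi$ to $\psi_{\mathbb{R}}$. When $\lambda(\psi) < 0$ one obtains a genuine complex structure $J$ on $V$ with respect to which $\psi$ is the real part of a complex volume form, reducible to $\psi_{\mathbb{C}}$. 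Together with the preceding open-orbit analysis, this identifies the two open orbits with the two connected components of $\{\lambda \neq 0\}$ and finishes the argument.
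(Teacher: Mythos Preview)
The paper does not prove this proposition; it simply cites it from \cite{Hitchin, Reichel} and moves on. Your outline is the standard Hitchin--Reichel argument and is correct: the relative-invariance of $\lambda$, the two model forms with stabilizers of real dimension $16$, and the product/complex-structure dichotomy coming from $K_\psi^2$ being a scalar multiple of the identity are exactly the ingredients used in those references. There is nothing to compare against in the present paper.
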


We denote by $\Lambda^3_+(V^*)$ the open orbit of stable 3-forms satisfying $\lambda(\psi)<0$. 
The $\text{GL}_+(V)$-stabilizer of a 3-form lying in this orbit is isomorphic to $\text{SL}(3,\mathbb{C})$. 
As a consequence, every $\psi\in\Lambda^3_{+}(V^*)$ gives rise to a complex structure  
\[
J_{\psi}\colon V \to V,\quad J_{\psi} \coloneqq - \frac{1}{\sqrt{|P(\psi)|}}\,K_{\psi},
\]   
which depends only on $\psi$ and on the volume form $\Omega$.  
Moreover, the complex form $\psi + i J_{\psi} \psi$ is of type $(3,0)$ with respect to $J_{\psi}$, and the real 3-form $J_{\psi} \psi$ is stable, too.

We say that a $k$-form $\alpha \in \Lambda^k(M)$ is stable if $\alpha_p$ is a stable form on the vector space $T_pM$, for all $p \in M$.
Let $\left(\omega,\psi_+\right)\in\Lambda^2(M)\times \Lambda^3_+(M)$ be a pair of stable forms on $M$ satisfying the compatibility condition $\omega\wedge\psi_+=0$ and $\lambda(\psi_+)<0$.
Consider the almost complex structure $J=J_{\psi_+}$ determined by $\psi_+$ and the volume form $\frac{\omega^3}{6}$. 
Then, the 3-form $\psi_+$ is the real part of a nowhere-vanishing $(3,0)$-form $\Psi \coloneqq \psi_+ +i \psi_-$  
with $\psi_- \coloneqq J\psi_+ = \psi_+(J\cdot,J\cdot,J\cdot) =  -\psi_+(J\cdot,\cdot,\cdot)$,  
where the last identity holds since $\psi_+$ is of type $(3,0)+(0,3)$ with respect to $J$. Moreover, 
$\omega$ is of type $(1,1)$ and, as a consequence, the $(0,2)$-tensor $g \coloneqq \omega(\cdot,J\cdot)$ is symmetric. 
Under these assumptions, the pair $(\omega,\psi_+)$ defines an $\text{SU}(3)$-structure $(g,J,\Psi)$ on $M$ provided that $g$ is a Riemannian metric and the \emph{normalization condition}
$
\psi_+ \wedge \psi_-=\frac{2}{3} \omega^3=4 \, {\text{d}}V_g
$
is satisfied, ${\text{d}}V_g$ being the Riemannian volume form. 
Conversely, given an $\text{SU}(3)$-structure $(g,J,\Psi)$ on $M$, the pair $(\omega,\psi_+)$ given by \[\omega\coloneqq g(J\cdot,\cdot), \quad \psi_+\coloneqq \text{Re}(\Psi)\] satisfies the compatibility  condition $\omega\wedge \psi_+=0$ and the stability condition $\lambda(\psi_+)<0$.

\subsubsection{Balanced $\text{\normalfont SU}(3)$-structures}

Following \cite{SU2}, we call an  $\text{SU}(3)$-structure $\left(g,J,\Psi\right)$ on a $6$-manifold $M$ \emph{balanced} if:
\begin{itemize}
\item $J$ is integrable, i.e., $\left(M,J\right)$ is a complex manifold. 
We recall that for $\text{SU}(3)$-structures, the integrability of $J$ is equivalent to requiring $({\text{d}}\Psi)^{2,2}=0$,
\item $\Psi$ is a holomorphic $(3,0)$-form,
\item ${\text{d}}\omega^{2}=0$, $\omega$ being the fundamental form.
\end{itemize}
In particular, we are interested in the non-K\"ahlerian case, i.e., ${\text{d}}\omega\neq 0$.

\begin{remark}
We can equivalently say that an $\text{SU}(3)$-structure $\left(g,J,\Psi\right)$ on $M$  is balanced if and only if 
\[
\begin{cases}
{\text{d}}\Psi=0, \\
{\text{d}}\omega^{2}=0, 
\end{cases}
\]
since
${\text{d}}\Psi=0$ if and only if $\Psi=\psi_+ + i \psi_- $ is holomorphic and the induced almost complex structure $J= J_{\psi_+}$ is integrable. 
\end{remark}

\begin{remark}
From the formulas in \cite{BedVezzSu3}, we have that if $\left(g,J,\Psi\right)$ is a balanced non-K\"ahler $\text{SU}(3)$-structure on a six-dimensional differentiable manifold $M$, $\text{Scal}(g)< 0$, $\text{Scal}(g)$ being the scalar curvature associated with the metric $g$. 
\end{remark}

\section{Balanced $\text{SU}(3)$-structures on six-dimensional cohomogeneity one manifolds} \label{sec_balanced}
Let $\left(g,J,\Psi\right)$ be an $\text{SU}(3)$-structure on a 
simply connected cohomogeneity one manifold $M$ of complex dimension $3$ for the almost effective action of a compact connected Lie group $G$. 
We are thus requiring $G$ to preserve the $\text{SU}(3)$-structure on $M$.
For the convenience of the reader, recall that
\begin{itemize}
\item $G$ preserves the metric $g$ if and only if $\tilde{\alpha}\left(h\right)$ is an isometry for each $h\in G$,
\item $G$ preserves the almost complex structure $J$ if and only if $J$ commutes with the differential $ {\text{d}}\tilde{\alpha}\left(h\right)$ for each $h \in G$,
\item $G$ preserves the $3$-form $\Psi$ if and only if $\tilde{\alpha}\left(h\right)^*\Psi=\Psi$, for each $h\in G$.
\end{itemize}
 This in particular implies that the principal isotropy $K$ acts on $T_p M$ preserving $\left(g_p,J_p,\Psi_p\right)$ for any $p\in M$, which means that $K$ is a subgroup of $\text{SU}(3)$.
 Now, since the $J$-invariant $K$-action fixes the subspace $\left<\xi|_p\right>$ of $T_pM$, then it fixes $\left<{J\xi}|_p\right>$ as well.
Let us write $T_p M $ as
\[T_p M= \left<\xi|_p\right>\oplus \left<{J \xi}|_p\right> \oplus V,
\] 
where $V$ is the four-dimensional $g_p$-orthogonal complement of $\left< \xi|_p,J\xi|_p\right>$ in $T_p M$.
Notice that $V$ is $J_p$- and $K$-invariant. 
To see the $K$-invariance, let $h \in K$ and $v \in V$. 
Then, if $ {\text{d}}\tilde{\alpha}(h)_p \left(v \right) = \lambda \left(J \xi|_p \right)+w$, for some $\lambda \in \mathbb{R}$, $w\in V$,
we would have $J \left( {\text{d}} \tilde{\alpha}(h)_p \left( v \right) \right)={\text{d}}\tilde{\alpha}(h)_p \left( J_pv \right)=-\lambda\,\xi|_p+J_p w$, which is a contradiction since the $K$-action is closed along the $G$-orbits.
Therefore, for each $h\in K$, its action on $T_p M$ is described by a $6 \times 6$ block matrix
\[
\left(
\begin{array}{c|c}
\begin{matrix} 1 & 0 \\ 0 & 1 \end{matrix} & \phantom{\begin{matrix} 0 & 0 & 0 & 0 \\ 0 & 0 & 0 & 0 \end{matrix}}\\
\hline \phantom{\begin{matrix} 0 & 0 \\ 0 & 0 \\ 0 & 0 \\ 0 & 0 \end{matrix}} & A 
\end{array}
\right)
\]
with respect to the decomposition of $T_p M= \left<\xi|_p\right>\oplus \left<{J \xi}|_p\right> \oplus V$.
Since the matrix above is in $\text{SU}(3)$, we have $A \in \text{SU}(2)$ and hence $K$ can be identified with a subgroup of $\text{SU}(2)$.
Therefore, $ \mathfrak{k}\coloneqq \text{Lie}\left(K\right)$ is $\{0\}, \, \mathbb{R}$, or $\mathfrak{su}\left(2\right)$. 
As observed in \cite{Pod}, all the possible candidate pairs $\left(\mathfrak{g},\mathfrak{k}\right)$, with $\mathfrak{g}$ compact, which may admit an $\text{SU}(3)$-structure in cohomogeneity one are: 
\begin{enumerate}
\item[(a)] if $\mathfrak{k}=\{0\}$, then 
\begin{enumerate}
\item[(1)] $\mathfrak{g}=\mathfrak{su}\left(2\right) \oplus \mathbb{R} \oplus \mathbb{R} $,
\item[(2)]  $\mathfrak{g}= \underbrace{\mathbb{R} \oplus \ldots \oplus \mathbb{R}}_{\text{5 times}}$,
\end{enumerate}
\item[(b)] if $\mathfrak{k}=\mathbb{R}$, then 
\begin{enumerate}
\item[(1)] $\mathfrak{g}=\mathfrak{su}\left(2\right) \oplus \mathfrak{su}\left(2\right)$,
\item[(2)] $ \mathfrak{g}= \mathfrak{su}\left(2\right) \oplus \mathbb{R} \oplus \mathbb{R} \oplus \mathbb{R} $,
\item[(3)] $\mathfrak{g}= \underbrace{\mathbb{R} \oplus \ldots \oplus \mathbb{R}}_{\text{6 times}}$,
\end{enumerate}  
\item[(c)] if $\mathfrak{k}=\mathfrak{su}\left(2\right)$, then
\begin{enumerate}
\item[(1)] $\mathfrak{g}=\mathfrak{su}\left(2\right) \oplus \mathfrak{su}\left(2\right) \oplus \mathbb{R} \oplus \mathbb{R}$,
\item[(2)] $\mathfrak{g}= \mathfrak{su}\left(2\right) \oplus \underbrace{\mathbb{R} \oplus \ldots \oplus \mathbb{R}}_{\text{5 times}}$,
\item[(3)] $\mathfrak{g}= \mathfrak{su}\left(3\right)$. 
\end{enumerate}
\end{enumerate} 

Under the assumption of simply connectedness of $M$, we can discard some pairs of this list.
If $M$ is compact, by Hoelcher's classification \cite[Proposition 3.1]{Hoelscher},  we can readily discard cases (a.2), (b.2), (b.3), (c.1), and (c.2).
For the case when $M$ is non-compact and has one singular orbit, we can suitably adapt \cite[Proposition 1.8]{Hoelscher} which deals with the compact case, to obtain:
\begin{proposition}
Let $M$ be the non-compact cohomogeneity one manifold given by the group diagram $G \supset H \supset K$ with $H/K = S^l$, and $l \geq 1$. 
Then, $\pi_1(M) \cong \pi_1 (G/H)$.
In particular $M$ is simply connected if and only if the image of $\pi_1 (H/K) = \pi_1 (S^l)$ generates $\pi_1(G/K)$ under the natural inclusions.
\end{proposition}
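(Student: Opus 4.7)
The plan is to reduce the computation of $\pi_1(M)$ to that of a homogeneous space, and then read off $N$ from the long exact sequence of a homogeneous fibration, paralleling Hoelscher's argument in the compact setting. The compact case must glue together tubular neighborhoods of two singular orbits along a principal orbit, whereas here there is a single singular orbit and a non-compact principal end, so the gluing step disappears and the topology of $M$ is captured entirely by a neighborhood of the orbit $G/H$.

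First, I invoke the description recalled at the end of Section~2: since $M$ has a single singular orbit $G/H$ and is non-compact, $M$ is homotopy equivalent to the associated disc bundle $G \times_H B_{\epsilon}(0)$, where $B_{\epsilon}(0) \subseteq T_{\gamma(0)}(G\cdot \gamma(0))^{\perp}$ is an $H$-invariant ball. Because $B_{\epsilon}(0)$ is $H$-equivariantly contractible to the origin, radial retraction in the normal disc induces a strong deformation retraction of $G \times_H B_{\epsilon}(0)$ onto $G \times_H \{0\} \cong G/H$, and hence $\pi_1(M) \cong \pi_1(G/H)$.

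Next, I exploit the homogeneous fibration
$$
H/K \longrightarrow G/K \longrightarrow G/H
$$
with fiber $S^l = H/K$, whose long exact sequence of homotopy groups reads
$$
\cdots \to \pi_1(H/K) \xrightarrow{\iota_*} \pi_1(G/K) \xrightarrow{p_*} \pi_1(G/H) \to \pi_0(H/K) \to \cdots.
$$
Since $l \geq 1$, the sphere $S^l$ is path-connected, so $\pi_0(H/K)=0$ and $p_*$ is surjective. By exactness $\ker p_* = \operatorname{Im} \iota_*$, giving $\pi_1(G/H) \cong \pi_1(G/K)/N$ with $N$ exactly as in the statement. Combined with the previous step, this yields $\pi_1(M) \cong \pi_1(G/K)/N$.

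The claim on simple connectedness is then immediate: $\pi_1(M)=0$ precisely when $N=\pi_1(G/K)$, i.e.\ when $\iota_* \colon \pi_1(S^l) \to \pi_1(G/K)$ is surjective, which is the generating condition in the statement. The only substantive point requiring care is the deformation retraction $M \simeq G/H$, which relies on the tube-theorem identification of a neighborhood of the singular orbit; once this is in hand, the remainder is a formal application of the long exact sequence of a fibration.
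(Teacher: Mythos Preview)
Your proof is correct and is precisely the adaptation of Hoelscher's compact argument that the paper alludes to but does not spell out: the paper merely states the proposition, remarking that one can ``suitably adapt \cite[Proposition 1.8]{Hoelscher}''. Your argument supplies exactly those details, via the deformation retraction $M \simeq G \times_H B_\epsilon(0) \simeq G/H$ and the long exact sequence of the fibration $H/K \to G/K \to G/H$.
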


We know that $\pi_1 (S^l)$ is either $\{ 0 \}$ or $\mathbb{Z}$.
Now, we observe that for cases (a.1) and (c.1), $\pi_1 (G/K)= \mathbb{Z}^2$, for cases (a.2), (b.3) and (c.2), $\pi_1 (G/K)= \mathbb{Z}^5$, and for case (b.2), $\pi_1 (G/K)$ is either $\mathbb{Z}^2$ or $\mathbb{Z}^3$. 
If $M$ is non-compact and has no singular orbits, $\pi_1 (M)=\pi_1 (G/K)$.
Hence, when $M$ is non-compact, we can discard the pairs (a.1), (a.2),  (b.2), (b.3), (c.1) and (c.2) as $\pi_1(M)$ would be infinite.
Therefore, the possible pairs which may admit a balanced $\text{SU}(3)$-structure on a simply connected manifold of cohomogeneity one under the almost effective action of a compact connected Lie group $G$ are (a.1) (only when M is compact), (b.1), and (c.3).

\begin{remark}\label{pq}
In case (b.1), we shall need to divide the discussion depending on the embeddings of $\mathfrak{k}=\mathbb{R}$ in $\mathfrak{g}=\mathfrak{su}(2)\oplus \mathfrak{su}(2)$ which, up to isomorphism, are all generated by an element of the form
\[\begin{pmatrix}
ip & 0 & 0 & 0 \\ 
0 & -ip & 0 & 0 \\ 
0 & 0 & iq & 0 \\ 
0 & 0 & 0 & -iq
\end{pmatrix} \in \mathfrak{su}(2)\oplus \mathfrak{su}(2),
\]
\\
with fixed $p, q \in \mathbb{N}$. 
We can assume either $(p,q)=(1,0)$ or $p,q$ to be coprime if neither is zero.
Notice that when $(p,q)=(1,1)$ or $(p,q)=(1,0)$, $\mathfrak{k}$ induces a decomposition of $\mathfrak{g}$ into $\text{Ad}(K)$-modules, some of which are equivalent. 
In the former case, we shall say that $\mathfrak{k}$ is diagonally embedded in $\mathfrak{g}$, while in the latter $\mathfrak{k}$ is said to be trivially embedded in one of the two $\mathfrak{su}(2)$-factors of $\mathfrak{g}$. 
When instead $p, q$ are different and nonzero, the $\text{Ad}(K)$-modules are pairwise inequivalent. 
\end{remark}

From now on, for each $p\in M^{\text{princ}}$, let $\mathfrak{m}_p\eqqcolon \mathfrak{m}$ be an $\text{Ad}(K)$-invariant complement of $\mathfrak{k}$ in $\mathfrak{g}$. For each $p\in M^{\text{princ}}$, we have that $T_p M=\left<\xi|_p\right>\oplus \widehat{\mathfrak{m}}|_p$, where for every $X \in \mathfrak{g}$, we denote by $\widehat X$ the action field
\[
\widehat X_p = \frac{{\text{d}}}{{\text{d}}t}\Big\rvert_{t=0}(\exp tX)\cdot p,\quad p \in M.
\]
It is known that since $M^{\text{princ}}\cong \overset{\circ}{I} \times G/K $, every $G$-invariant structure on $M^{\text{princ}}$ can be expressed via a $K$-invariant structure on $\left<\xi\right>\oplus \widehat{\mathfrak{m}} $, with $C^{\infty}(\overset{\circ}{I})$-coefficients. 
Let $\mathfrak{m}=\mathfrak{m}_1 \oplus \ldots \oplus \mathfrak{m}_r$ be the decomposition of $\mathfrak{m}$ into irreducible $\text{Ad}(K)$-modules. Recall that if the $\mathfrak{m}_i$'s are pairwise inequivalent, then they are orthogonal with respect the metric $g_t$,  for every $t$ (see \ref{metric}).
Otherwise, the expression of the metric strongly depends on the specific equivalence of the modules. In all cases, we recover the whole $\text{SU}(3)$-structure from a pair of $G$-invariant stable forms $\left(\omega,\psi_+\right)$ of degree two and three, respectively.

To fix the notations, in what follows, we shall denote by 
\begin{itemize}
\item $\mathcal{B}$ the negative of the Killing--Cartan form on $\mathfrak{g}$,
\item $\left\{\tilde{e}_i\right\}_{i=1,2,3}$ the standard basis for $\mathfrak{su}(2)$ given by 
\[
\tilde{e}_1=\begin{pmatrix} i & 0 \\ 0 & -i \end{pmatrix}, \qquad \tilde{e}_2=\begin{pmatrix} 0 & i \\ i & 0 \end{pmatrix}, \qquad \tilde{e}_3=\begin{pmatrix} 0 & 1 \\ -1 & 0 \end{pmatrix},
\]
\item $\left\{f_i\right\}_{i=1,\ldots, m}$ the standard basis for $\mathfrak{g}=\mathfrak{k}\oplus \mathfrak{m}$, $\mathfrak{k}=\left<f_1,\ldots,f_k\right>$, $\mathfrak{m}=\left<f_{k+1},\ldots,f_m\right>$, where $k=\operatorname{dim}\mathfrak{k}$,
$m=\operatorname{dim}\mathfrak{g}$,
\item $e_1 \coloneqq \xi \cong \frac{\partial}{\partial t}$,
\item $e_i \coloneqq \widehat{f}_{j}$, $j=\operatorname{dim}\mathfrak{k}-1+i$, the Killing vector fields on $M^{\text{princ}}$ induced by the $G$-action, for $i=2,\ldots,6$,
\item $e^i$ the dual 1-forms to $e_i$.
\end{itemize}
Therefore, in what follows $\left\{e_i \right\}_{i=1,\ldots,6}$ will be vectors on $M^{\text{princ}}$ which provide a basis
for $T_pM$ at each point $p=\gamma(t)\in M^{\text{princ}}$, where $\gamma\colon \overset{\circ}{I} \to M$ is a normal geodesic through the point $p$.

Moreover, we recall some basic facts about $G$-actions which will be useful for our discussion:
\begin{itemize}
\item Since $g \cdot \gamma_p=\gamma_{g\cdot p } $ for the uniqueness of the normal geodesic $\gamma$ starting from the point $g\cdot p $, we have that $\Phi_1^{\hat{X}}\circ \Phi_t^{\xi}(p)=\Phi_t^{\xi}\circ \Phi_1^{\hat{X}}(p)$, where $\Phi_t^v$ denotes the flow of the vector field $v$ evaluated at time $t$. This is equivalent to $[\xi, \hat{X}]=0$, for each $X\in \mathfrak{g}$;
\item A $k$-form $\alpha$ on $M^\text{princ}$ of the form
\[
\alpha=\sum_{i_1<\ldots<i_k=1}^{6} a_{i_1\ldots i_k} \,e^{i_1\ldots i_k},
\]	
with $a_{i_1\ldots i_k} \in C^{\infty}(\overset{\circ}{I})$ for all $i_1<\ldots<i_k$, is $G$-invariant if and only if $\alpha_p$ is $K$-invariant for all $p\in M^{\text{princ}}$. Here $e^{i_1\ldots i_k}$ is a shorthand for the wedge product $e^{i_1}\wedge \ldots \wedge e^{i_k}$ of $1$-forms.
Analogously we shall indicate with $\beta^k$ the wedge product of $\beta$ with itself for $k$-times $\beta \wedge \ldots \wedge \beta$;
\item If $\alpha$ is a $G$-invariant $k$-form on $M$ and $v_1,\ldots, v_k$ are $G$-invariant vector fields on $M$, then $\alpha\left(v_1,\ldots,v_n\right)|_p$ is constant along the $G$-orbit through $p$, for each $p\in M$.
\end{itemize}

We are now ready to state Theorem A. 

\begin{theoremA}
Let $M$ be a six-dimensional simply connected cohomogeneity one manifold under the almost effective action of a connected Lie group $G$, and let $K$ be the principal isotropy group. Then, the principal part $M^{\text{\normalfont princ}}$ admits a $G$-invariant balanced non-K\"ahler $\text{\normalfont SU}(3)$-structure $\left(g,J,\Psi\right)$ if and only if 
$\left( \mathfrak{g}, \mathfrak{k} \right)=\left(\mathfrak{su}(2)\oplus\mathfrak{su}(2), \Delta \mathbb{R} \right)$ 
or
$M$ is compact and $\left( \mathfrak{g}, \mathfrak{k} \right)=\left(\mathfrak{su}(2)\oplus 2\mathbb{R},\{0\} \right)$.
\end{theoremA}

\section{Proof of Theorem A} 

From all the above discussion and the previous lemmas, the only possible pairs allowing $M^{\text{princ}}$ to support a balanced $\text{SU}(3)$-structure are (a.1) with $M$ compact, (c.3), and (b.1). We investigate these three cases separately.

For any of these cases, we shall consider the generic pair $(\omega,\psi_+)$
	 of $G$-invariant forms on $M^{\text{princ}}$ of degree two and three, respectively, with $C^{\infty}(\overset{\circ}{I})$-coefficients.
	In order for the pair $(\omega,\psi_+)$ to define a $G$-invariant balanced non-K\"ahler $\text{SU}(3)$-structure on $M^{\text{princ}}$, we have to impose the following conditions:
	\begin{enumerate}[ref=(\arabic*)]
		\item \label{1} the stability conditions:
		\begin{itemize}
			\item $\omega^3 \neq 0$,
			\item $\lambda \coloneqq \lambda\left(\psi_+\right)<0$,
		\end{itemize}
		\item the compatibility conditions $\psi_{\pm} \wedge \omega=0$,
		\item the normalization condition $\psi_+ \wedge \psi_-=\frac{2}{3} \omega^3$,
		\item ${\text{d}} \psi_{\pm}=0$,
		\item the balanced condition ${\text{d}}\omega^2=0$,
		\item the non-K\"ahler condition ${\text{d}}\omega \neq 0$,
		\item \label{7} the positive-definiteness of the induced symmetric bilinear form $g \coloneqq \omega(\cdot, J \cdot)$ on $M^{\text{princ}}$.
\end{enumerate}

We start by case (b.1).

\subsection{Case (b.1)}\label{Case (b.1)}
$\left(\mathfrak{g},\mathfrak{k}\right)=\left(\mathfrak{su}(2)\oplus \mathfrak{su}(2), \mathbb{R}\right)$.

In the notation of Remark \ref{pq}, let us suppose $p, q$ nonzero and coprime with $(p,q) \neq (1,1)$, first.
Consider the $\mathcal{B}$-orthonormal basis of $\mathfrak{g}$ given by
\begin{equation}\label{basepq}
\begin{alignedat}{3}
&f_1=\dfrac{1}{2\sqrt{2(p^2+q^2)}}\begin{pmatrix} p \tilde{e}_1 & 0 \\ 0 & q\tilde{e}_1 \end{pmatrix} \hspace{0.75em}
&&f_2=\dfrac{1}{2\sqrt{2(p^2+q^2)}}\begin{pmatrix} q \tilde{e}_1 & 0 \\ 0 & -p\tilde{e}_1 \end{pmatrix} \\
&f_3=\dfrac{1}{2\sqrt{2}}\begin{pmatrix} \tilde{e}_3 & 0 \\ 0 & 0 \end{pmatrix}  \hspace{0.75em}
&&f_4=\dfrac{1}{2\sqrt{2}}\begin{pmatrix} 0 & 0 \\ 0 & \tilde{e}_3 \end{pmatrix} \\
&f_5=\dfrac{1}{2\sqrt{2}}\begin{pmatrix} \tilde{e}_2 & 0 \\ 0 & 0 \end{pmatrix} \hspace{0.75em}
&&f_6=\dfrac{1}{2\sqrt{2}}\begin{pmatrix} 0 & 0 \\ 0 & \tilde{e}_2 \end{pmatrix}.
\end{alignedat}
\end{equation}
and take $\mathfrak{k}=\left<f_1\right>$. Notice that since rk$(\mathfrak{su}(2))=1$, this assumption is not restrictive.  The decomposition of $\mathfrak{g}$ into irreducible $\text{Ad}\left(K\right)$-modules is given by 
\[\mathfrak{g}=\mathfrak{k}\oplus \mathfrak{a}\oplus \mathfrak{b}_1\oplus \mathfrak{b}_2,
\]
where $\mathfrak{a}\coloneqq \left<f_2\right>$ is $\text{Ad}(K)$-fixed, $\mathfrak{b}_1\coloneqq \left<f_3, f_5\right>$ and $\mathfrak{b}_2\coloneqq \left<f_4, f_6\right>$, hence $\mathfrak{m}= \mathfrak{a}\oplus \mathfrak{b}_1\oplus \mathfrak{b}_2$.
Fix the orientation given by $\Omega=e^{1\ldots 6}$ and consider the general $G$-invariant $3$-form $\psi_+$ on $M^{\text{princ}}$,
\[
\psi_+ \coloneqq p_1\, e^{135} + p_2\, e^{146}+ p_3\, e^{235} + p_4\, e^{246},
\]
where $p_j\in C^{\infty}(\overset{\circ}{I})$, $j=1,\ldots, 4$.
A simple calculation shows that the stability condition $\lambda(\psi_+)<0$ never holds, since
$\lambda(\psi_+)=\left(p_1 p_4 - p_2 p_3 \right)^2\geq 0$.
Alternatively, by \cite[Proposition 2]{Hitchin} we can directly see that $\lambda$ is non-negative, 
as $\psi_+ = (p_1\, e^1 +p_3\, e^2) \wedge e^{35}+ (p_2\, e^1 +p_4\, e^2 ) \wedge e^{46}$ can be written as a sum of real decomposable forms. 

Now let $(p,q)=(1,0)$ and consider the $\mathcal{B}$-orthogonal basis of $\mathfrak{g}$ given by \ref{basepq} when $(p,q)=(1,0)$ and assume $\mathfrak{k}=\left<f_1\right>$ as before.
Then, the decomposition of $\mathfrak{g}$ into irreducible $\text{Ad}(K)$-modules is given by 
\[
\mathfrak{g}=\mathfrak{k}\oplus \mathfrak{b}_1\oplus \mathfrak{a}_1\oplus \mathfrak{a}_2 \oplus \mathfrak{a}_3,
\]
where $\mathfrak{b}_1 \coloneqq \left<f_3, f_5\right>$, $\mathfrak{a}_1\coloneqq \left<f_2\right>$, $\mathfrak{a}_2\coloneqq \left<f_4\right>$ and $\mathfrak{a}_3\coloneqq \left<f_6\right>$. Observe that the $\mathfrak{a}_i$'s are equivalent.
Consider the generic $G$-invariant $3$-form $\psi_+$ on $M^{\text{princ}}$, which is of the form
\[
\psi_+ \coloneqq p_1\, e^{124} + p_2\, e^{126}+ p_3\, e^{135} + p_4\, e^{146}+ p_5\, e^{235} + p_6\, e^{246}+ p_7\, e^{345} + p_8\, e^{356},
\]
where $p_j\in C^{\infty}(\overset{\circ}{I})$, $j=1,\ldots,8$.
It is straightforward to show that 
$\lambda(\psi_+)=\left(p_1 p_8 + p_2 p_7 - p_3 p_6 +p_4 p_5\right)^2\geq 0$.

\begin{remark}
By the previous discussion we have that when $\left(\mathfrak{g},\mathfrak{k}\right)=\left(\mathfrak{su}(2)\oplus \mathfrak{su}(2), \mathbb{R}\right)$ with $\mathfrak{k}$ not diagonally embedded in $\mathfrak{g}$, $M$ admits no $G$-invariant $\text{\normalfont SL}(3,\mathbb{C})$-structures, i.e., $G$-invariant stable $3$-forms inducing an almost complex structure on $M$.
\end{remark}

Finally, let us consider the case when $\mathfrak{k}$ is diagonally embedded in $\mathfrak{g}$.
Without loss of generality, we can assume $(p,q)=(1,1)$.
We consider the $\mathcal{B}$-orthonormal basis of $\mathfrak{g}$ given by  \ref{basepq} when $(p,q)=(1,1)$.
The decomposition of $\mathfrak{g}$ into irreducible $\text{Ad}\left(K\right)$-modules is given by 
\[\mathfrak{g}=\mathfrak{k}\oplus \mathfrak{a}\oplus \mathfrak{b}_1\oplus \mathfrak{b}_2,
\]
where $\mathfrak{k}=\left<f_1\right>$, $\mathfrak{a}\coloneqq \left<f_2\right>$ is $\text{Ad}(K)$-fixed, $\mathfrak{b}_1\coloneqq \left<f_3, f_5\right>$ and $\mathfrak{b}_2\coloneqq \left<f_4, f_6\right>$.
Then, $\mathfrak{m}= \mathfrak{a}\oplus \mathfrak{b}_1\oplus \mathfrak{b}_2$.
Unlike the case $p\neq q$ both nonzero, here the equivalence of the $\mathfrak{b}_i$-modules implies that the metric $g$ on $M^{\text{princ}}$ is not necessarily diagonal but of the form:
\[
g={\text{d}}t^2 + f(t)^2 \mathcal{B}|_{\mathfrak{a}\times\mathfrak{a}}+ h_1(t)^2 \mathcal{B}|_{\mathfrak{b}_1\times \mathfrak{b}_1}
+ h_2(t)^2 \mathcal{B}|_{\mathfrak{b}_2\times \mathfrak{b}_2}+ \mathcal{Q}|_{\mathfrak{b}_1\times \mathfrak{b}_2},
\]
for some $f, h_1, h_2 \in C^{\infty}(\overset{\circ}{I})$, where $\mathcal{Q}$ denotes a symmetric quadratic form on the isotypic component $\mathfrak{b}_1\oplus \mathfrak{b}_2$.
In particular, the metric coefficients $g_{ij}\coloneqq g(e_i,e_j)$ must satisfy
\begin{equation}\label{condmetric}
\begin{aligned}
g_{1i} &=g_{i1}=0, \quad i=2, \ldots, 6, \\
g_{2i} &=g_{i2}=0, \quad i=3,\ldots, 6, \\
g_{33}&=g_{55}, \quad g_{35}=g_{53}=0, \\
g_{44}&=g_{66}, \quad g_{46}=g_{64}=0.
\end{aligned}
\end{equation}
where $e_i$, $i=1,\ldots,6$, are the vector fields defined in the usual way.
Fix the orientation given by $\Omega \coloneqq e^{1\ldots6}$, and consider a pair of $G$-invariant forms $\left(\omega,\psi_+\right)$ of degree two and three, given respectively by
\begin{align*}
\omega \coloneqq&  h_1\, e^{12}+h_2\, e^{35} + h_3\, e^{46}+h_4 (e^{34}+e^{56}) + h_5 (e^{36}+e^{45} ), \\
\psi_+ \coloneqq&  p_1\, e^{135} + p_2\, e^{146}+ p_3 (e^{134}+e^{156}) + p_4 (e^{136}+e^{145})\\
&+ p_5 \,e^{235} + p_6\, e^{246}+ p_7 ( e^{234} +e^{256})+ p_8 (e^{236}+e^{245}),
\end{align*}
where $h_i, p_j\in C^{\infty}(\overset{\circ}{I})$, $i=1,\ldots,5$, $j=1,\ldots,8$.
Moreover, the structure equations are given by
\[
{\text{d}}e^1=0, ~ {\text{d}}e^2=\frac{1}{2} \left( e^{35} -e^{46}\right), ~ {\text{d}}e^3=- \frac{1}{2} e^{25}, ~ {\text{d}}e^4=\frac{1}{2}e^{26}, ~
{\text{d}}e^5=\frac{1}{2}e^{23}, ~ {\text{d}}e^6=- \frac{1}{2}e^{24}.
\]
In order to find a $G$-invariant balanced non-K\"ahler $\text{SU}(3)$-structure on $M^{\text{princ}}$, we have to impose the conditions \ref{1} to \ref{7}  listed at the beginning of this Section, together with \ref{condmetric}.
We write all conditions in terms of the coefficients $h_i, p_j$ of $\left(\omega,\psi_+\right)$, for $i=1,\ldots,5$, $j=1,\ldots,8$. 

(1) The first stability condition $\omega^3 \neq 0$ is:
$$
\omega^3 =-6 h_1 (h_2 h_3 -h_4^2 -h_5^2) e^{123456} \neq 0.
$$
In particular, $h_1 \neq 0$. 
For the second stability condition $\lambda <0$ we compute:
$$
\begin{array}{ll}
\lambda =
&p_2^2 p_5^2 - 2 p_1 p_2 p_5 p_6 + 4 p_3^2 p_5 p_6 + 4 p_4^2 p_5 p_6 + p_1^2 p_6^2 \\
&- 4 p_2 p_3 p_5 p_7 - 4 p_1 p_3 p_6 p_7 + 4 p_1 p_2 p_7^2 - 4 p_4^2 p_7^2 \\
&- 4 p_2 p_4 p_5 p_8 - 4 p_1 p_4 p_6 p_8 + 8 p_3 p_4 p_7 p_8 + 4 p_1 p_2 p_8^2 \\
&- 4 p_3^2 p_8^2
\end{array}
$$

(2) Compatibility conditions: % $\psi_+ \wedge \omega$:
\begin{equation*}
\label{eq:compp}
\begin{cases}
-h_3 p_1 - h_2 p_2 + 2 h_4 p_3 + 2 h_5 p_4 =0, \\
-h_3 p_5 - h_2 p_6 + 2 h_4 p_7 + 2 h_5 p_8 =0, \\
-h_3 q_1 - h_2 q_2 + 2 h_4 q_3 + 2 h_5 q_4 =0, \\
-h_3 q_5 - h_2 q_6 + 2 h_4 q_7 + 2 h_5 q_8 =0.
\end{cases}
\end{equation*}

(3) Normalization condition $\psi_+ \wedge \psi_- = \dfrac{2}{3} \omega^3$:
\begin{equation*}
-6 \sqrt{- \lambda}=-4 h_1 (h_2 h_3 -h_4^2 -h_5^2).
\end{equation*}

(4) A computation shows ${\text{d}}\psi_+=0$ if and only if
\begin{equation}\label{chiusura}
\begin{cases}
p_8'-p_3=0, \\
p_7'+p_4=0, \\
p_5=p_6, \\
p_6'=0.
\end{cases}
\end{equation}
Let us suppose that $\psi_+$ is stable with $\lambda<0$, and consider the induced almost complex structure $J$ on $M^{\text{princ}}$. 
Recall that by $G$-invariance,
$\psi_-=J \psi_+$ needs to be of the same general form of $\psi_+$, namely
\begin{align*}
\psi_- =& q_1 e^{135} + q_2 e^{146}+ q_3 (e^{134}+e^{156}) + q_4 (e^{136}+e^{145})\\
&+q_5 e^{235} + q_6 e^{246}+ q_7 ( e^{234} +e^{256})+ q_8 (e^{236}+e^{245}),
\end{align*}
where the $q_i$'s are functions of $\{p_j\}_{j=1,\ldots,8}$ for $i=1,\ldots,8$.
Therefore, ${\text{d}}\psi_-=0$ if and only if
\begin{equation}\label{cochiusura}
\begin{cases}
q_8'-q_3=0, \\
q_7'+q_4=0, \\
q_5=q_6, \\
q_6'=0.
\end{cases}
\end{equation}

(5) Balanced condition: One has that ${\text{d}}\omega^2=0$ if and only if 
\[
\dfrac{h_1}{2}\left(h_3-h_2\right)-\left(h_2h_3-h_4^2-h_5^2\right)'=0.
\]

(6) Non-K\"ahler condition $d \omega \neq 0$: ${\text{d}}\omega=0$ if and only if 
\[
\begin{cases}
-\dfrac{h_1}{2}+h_2'=0, \\
(h_2+h_3)'=0,\\
h_4=h_5=0.
\end{cases}
\]

(7) We would also need to check that the metric $g$ is positive definite.

The only non-redundant equation from (4.2) is $g_{12}=0$, which is equivalent to 
\begin{equation}\label{condmetric1}
p_1p_6+p_2p_6-2p_3p_7-2p_4p_8=0, 
\end{equation}
where we have already assumed $p_5=p_6$ from (4.3).
Since $p_6'=0$ and all the conditions for the $G$-invariant balanced non-K\"ahler $\text{SU}(3)$-structure involve only homogeneous polynomials,
we can assume either $p_6=0$ or $p_6=1$, up to scalings.

We compute $\psi_-$ 
and note that
the condition $q_5=q_6$ implies
\begin{equation}\label{eq:q5q6}
    (p_1 -p_2)(-p_6^2 +p_7^2 +p_8^2)=0.
\end{equation}

We will show that both for $p_6=0$ and for $p_6=1$, (1)--(7) requires $p_1=p_2$.
If $p_6=0$, then 
$$
\begin{array}{ll}
\lambda 
=4p_1 p_2 (p_7^2 +p_8^2 ) -  4(p_4 p_7 - p_3 p_8)^2.
\end{array}
$$
From equation \ref{eq:q5q6} we must have $p_1=p_2$, as if it were not true, we would have $p_7=p_8 = 0$ and therefore $\lambda=0$.

Suppose now that $p_6=1$. Equation \ref{eq:q5q6} implies that we have two possibilities: either $p_7^2 +p_8^2=1$ or $p_1=p_2$.
We first assume $p_7^2 +p_8^2=1$. Differentiating this expression and using (4.3), we get
$$
p_4 p_7 -p_3 p_8 =0.
$$
Hence, 
$$
\begin{array}{ll}
\lambda
&=4 (p_3^2 +p_4^2 -(p_3 p_7 + p_4 p_8)^2 ).\\
\end{array}
$$
By Cauchy--Schwarz, 
$$
(p_3 p_7 + p_4 p_8)^2 \leq (p_3^2 +p_4^2)(p_7^2 +p_8^2),
$$
and since $p_7^2 +p_8^2=1$, we get $\lambda \geq 0$ which gives a contradiction.
Hence any solution will have $p_1=p_2$.

We now provide an example of a solution to (1)--(7) on $M^\text{princ}$. To do so, we set $p_1=p_2=p_3=p_5=p_6=p_8=h_4=h_5=0$. In particular, the compatibility conditions will then be automatically satisfied. A solution is given by setting
$$
\begin{array}{ll}
h_2=&-3t, \\
h_3=&t, \\
h_1=& -3, \\
p_4=& - \dfrac{3 \sqrt{2}}{2} t^{1/2}, \\
p_7=& \sqrt{2} t^{3/2}, \\
\end{array}
$$
for $t>0$
and all other coefficients equal to 0.
This satisfies conditions 1--7 and hence gives $G$-invariant balanced non-K\"ahler $\text{SU}(3)$-structure on $M^\text{princ}$.
Then, by performing the change of variable 
$$
\tilde{t}(t)=\sqrt{6} t^{3/2},
$$
we get
$$
\begin{array}{ll}
\omega &=-3\, e^{12} + 6^{-1/3} \tilde{t}^{2/3} ( -3 e^{35} + e^{46}), \\
\psi_+ &= - 2^{-2/3} 3^{5/6} \tilde{t}^{1/3}(e^{136}+e^{145}) + 3^{-1/2} \tilde{t} (e^{234}+e^{256}),
\end{array}
$$
and the metric on $M^\text{princ}$ with respect to the $\tilde{t}$ parameter is then represented by the matrix (so the basis of the dual space taken is $\{ d\tilde{t}, e^2, e^3, e^4, e^5, e^6 \}$)
\[
(g_{ij})=
\begin{pmatrix} 
    1 & 0 & 0 & 0 & 0 & 0 \\
    0 & \tilde{t}^2 & 0 & 0 & 0 & 0 \\
    0 & 0 & \dfrac{3}{2}\tilde{t}^2 & 0 & 0 & 0 \\
    0 & 0 & 0 & \dfrac{1}{2}\tilde{t}^2 & 0 & 0 \\
    0 & 0 & 0 & 0 & \dfrac{3}{2}\tilde{t}^2 & 0 \\
    0 & 0 & 0 & 0 & 0 & \dfrac{1}{2}\tilde{t}^2
\end{pmatrix}.
\]

It follows from \cite{Verdiani} or \cite{FH17} that the solution above does not extend to a singular orbit at $t=0$ to give a smooth metric on the whole manifold.

\subsection{Case (c.3)}\label{Case (c.3)} $\mathfrak{g}=\mathfrak{su}(3)$, $\mathfrak{k}=\mathfrak{su}(2)$.

Consider the $\mathcal{B}$-orthogonal basis of $\mathfrak{g}$ given by 
\begin{equation*}
\begin{aligned}
f_1&=\begin{pmatrix}
0 & i & 0 \\
i & 0 & 0 \\ 
0 & 0 & 0
\end{pmatrix} & f_2&=\begin{pmatrix}
0 & 1 & 0 \\ 
-1 & 0 & 0 \\ 
0 & 0 & 0
\end{pmatrix} &  f_3&=\begin{pmatrix}
i & 0 & 0 \\ 
0 & -i & 0 \\ 
0 & 0 & 0
\end{pmatrix} &
f_4&=\begin{pmatrix}
0 & 0 & i \\ 
0 & 0 & 0 \\ 
i & 0 & 0
\end{pmatrix} \\
f_5&=\begin{pmatrix}
0 & 0 & 1 \\ 
0 & 0 &d 0 \\ 
-1 & 0 & 0
\end{pmatrix} &  f_6&=\begin{pmatrix}
0 & 0 & 0 \\ 
0 & 0 & i \\ 
0 & i & 0
\end{pmatrix}
&  f_7&=\begin{pmatrix}
0 & 0 & 0 \\ 
0 & 0 & 1 \\ 
0 & -1 & 0
\end{pmatrix} &  f_8&=\frac{1}{\sqrt{3}} \begin{pmatrix}
i & 0 & 0 \\ 
0 & i & 0 \\ 
0 & 0 & -2i
\end{pmatrix}\hspace{-3pt}.
\end{aligned}
\end{equation*}
Then, $\mathfrak{k}=\left< f_1, f_2, f_3\right>$.
Let $\mathfrak{a}\coloneqq \left<f_8\right>$ and $\mathfrak{n}\coloneqq \left< f_4, f_5, f_6, f_7\right>$, hence, $\mathfrak{m}=$ \mbox{$\mathfrak{a}\oplus \mathfrak{n}$}.
Since the $\text{Ad}(K)$-invariant irreducible modules in the decomposition of $\mathfrak{g}$ are pairwise inequivalent, the metric $g$ on  $M^{\text{princ}}$ is diagonal and, in particular, it is of the form 
\[
g={\text{d}}t^2+h(t)^2\mathcal{B}|_{\mathfrak{a}\times\mathfrak{a}}+f(t)^2\mathcal{B}|_{\mathfrak{n}\times \mathfrak{n}}, 
\]
for some positive $h, f \in C^{\infty}(\overset{\circ}{I})$.
Moreover, with respect to the frame $\{e_i\}_{i=1,\ldots,6}$ of $M^{\text{princ}}$, the structure equations are given by 
\begin{align*}
& {\text{d}}e^1=0, &  &{\text{d}}e^2=-\sqrt{3}e^{36}, & &{\text{d}}e^3=\sqrt{3}e^{26}, \\ &{\text{d}}e^4=-\sqrt{3}e^{56}, & &{\text{d}}e^5=\sqrt{3}e^{46}, 
& &{\text{d}}e^6=-\sqrt{3}(e^{23}+e^{45}).
\end{align*}
Fix the volume form $\Omega=e^{1\ldots 6}$.
One can easily show that a pair of generic $G$-invariant forms $\left(\omega,\psi_+\right)$ on $M^\text{princ}$ of degree two and three is given, respectively, by 
\begin{align*}
\omega \coloneqq&  h_1\, e^{16}+h_2\, (e^{23}+e^{45}) + h_3\,(e^{24}-e^{35})+h_4 (e^{25}+e^{34} ), \\
\psi_+ \coloneqq&  p_1\, (e^{123}+e^{145}) + p_2\, (e^{124}-e^{135})+ p_3 (e^{246}-e^{356}) + p_4 (e^{236}+e^{456})\\
&+ p_5 \,(e^{125}+e^{134}) + p_6\,(e^{256}+e^{346}),
\end{align*}
where $h_i, p_j\in C^{\infty}(\overset{\circ}{I})$, $i=1,\ldots,4$, $j=1,\ldots,6$.
As we did for case (b.1), we are going to show that the system of equations resulting from imposing conditions (1)--(7) is incompatible.

A simple computation shows that ${\text{d}}\psi_+=0$ if and only if
\[
\begin{cases}
p_6'-2\sqrt{3}\,p_2=0, \\
p_3'+2\sqrt{3}\,p_5=0, \\
p_4=p_4'=0.
\end{cases}
\]
From the $G$-invariance, 
\begin{align*}
\psi_- \coloneqq&  q_1\, (e^{123}+e^{145}) + q_2\, (e^{124}-e^{135})+ q_3 (e^{246}-e^{356}) + q_4 (e^{236}+e^{456})\\
&+ q_5 \,(e^{125}+e^{134}) + q_6\,(e^{256}+e^{346}),
\end{align*}
where the $q_i$'s are functions of $\{p_j\}_{j=1,\ldots,6}$ for $i=1,\ldots,6$.
Therefore, ${\text{d}}\psi_-=0$ if and only if
\[
\begin{cases}
q_6'-2\sqrt{3}q_2=0, \\
q_3'+2\sqrt{3}q_5=0, \\
q_4=q_4'=0.
\end{cases}
\]
In particular, from $p_4=0$, it follows
\[ q_4=\dfrac{2(p_3^2+p_6^2)p_1}{\sqrt{-\lambda}},
\] 
with $\lambda=-4(p_1^2 \, (p_3^2+p_6^2)+(p_2\,p_6-p_3\,p_5)^2)$.
We suppose that $\psi_+$ is stable with $\lambda <0$.
Then, $q_4=0$ if and only if $p_1=0$.
Since $p_1$ has to be equal to zero, it can be shown that the compatibility condition $\psi_+ \wedge \omega=0$
is equivalent to the following system of equations:
\begin{equation}\label{CompDiag}
\begin{cases}
h_3p_3+h_4p_6=0, \\
h_3p_2+h_4p_5=0.
\end{cases}
\end{equation}

Moreover, the positive-definiteness of $g$ implies $h_1>0$.
Then, the normalization condition $\psi_+ \wedge\psi_-=\frac{2}{3}\omega^3$ is equivalent to
\begin{equation}\label{normSU3}
\lvert p_2p_6-p_3p_5\rvert=h_1(h_2^2+h_3^2+h_4^2).
\end{equation}
The balanced condition ${\text{d}}\omega^2=0$ is satisfied if and only if 
\begin{equation}\label{balancedSU3}
2\sqrt{3}h_1h_2+(h_2^2+h_3^2+h_4^2)'=0.
\end{equation}
Finally, the K\"ahler condition ${\text{d}}\omega=0$ holds if and only if 
\begin{equation}\label{KahlerSU3}
\begin{cases}
h_3=h_4=0\\
\sqrt{3}h_1+h_2'=0.
\end{cases}
\end{equation}
Multiplying \ref{normSU3} by $h_4$ and using \ref{CompDiag}, we obtain $h_4h_1(h_2^2+h_3^2+h_4^2)=0$ and, since $h_1>0$ and $h_2=h_3=h_4=0$ would imply $\omega^3=0$, we necessarily have $h_4=0$.
Then, \ref{CompDiag} implies 
\[
\begin{cases}
h_3p_3=0\\
h_3p_2=0,
\end{cases}
\]
from which it follows $h_3=0$ since $p_2=p_3=0$ would imply $\lambda=0$.
Then, \ref{balancedSU3} reads $h_2(\sqrt{3}h_1+h_2')=0$ and, since $h_2\neq 0 $, in order to have $\omega^3\neq 0$, we have $\sqrt{3}h_1+h_2'=0$, namely ${\text{d}}\omega=0$.
Therefore, any $G$-invariant balanced 
$\text{SU}(3)$-structure on the corresponding $M$ is necessarily K\"ahler.
This concludes case (c.3).

\subsection{Case (a.1)}\label{Case (a.1)}  $\mathfrak{g}=\mathfrak{su}(2)\oplus 2\mathbb{R}$, $\mathfrak{k}=\{0\}$. 

Since $\mathfrak{k}=\{0\}$, we can write $T_pM\cong \left<e_1|_p\right> \oplus \hat{\mathfrak{g}}\big|_p$, for each $p\in M^{\text{princ}}$.
Moreover, every $k$-form $\alpha$ on $M^{\text{princ}}$ of the form
\[ \alpha=\sum_{1\leq i_1 < \ldots < i_k\leq 6} \alpha_{i_1\ldots i_k} e^{i_1\ldots i_k},
\]
where $\alpha_{i_1\ldots i_k} \in C^{\infty}(\overset{\circ}{I})$, is $G$-invariant.
Let
\begin{equation}\label{psip,omg a.1}
	\omega \coloneqq \sum_{1 \leq i < j \leq 6} h_{ij}e^{ij}, \qquad \psi_+ \coloneqq \sum_{1\leq i < j < k \leq 6}p_{ijk} e^{ijk}
\end{equation}
be a pair of generic $G$-invariant forms on $M^{\text{princ}}$ of degree two and three, respectively, with coefficients $h_{ij}, p_{ijk} \in C^{\infty}(\overset{\circ}{I})$.
If we choose a  $\mathcal{B}$-orthogonal basis of $\mathfrak{su}(2)$ with vectors of constant norm, say 
\[
f_i=\left( \begin{array}{c|c|c}
	\tilde{e}_i & \vphantom{\begin{pmatrix} 0 \\ 0 \end{pmatrix}}  & \\ \hline
	\phantom{\begin{matrix} 0 & 0 \end{matrix}} & 0 & \\ \hline
	& & 0 \end{array}
\right)\hspace{-2pt}, \, i=1,2,3, 
\]
and extend it to a basis $\{f_i\}_{i=1,\ldots,5}$ of $\mathfrak{g}$, the structure equations with respect to the frame $\{e_i\}_{i=1,\ldots,6}$ of $M^{\text{princ}}$ are given by 
\[
{\text{d}}e^1=0, \quad {\text{d}}e^2=-2e^{34}, \quad {\text{d}}e^3=2e^{24}, \quad {\text{d}}e^4=-2e^{23}, \quad {\text{d}}e^5=0, 
\quad {\text{d}}e^6=0.
\]
Fix the volume form $\Omega\coloneqq-e^{1\ldots6}$.
We consider the forms given in \ref{psip,omg a.1} and set
\begin{align*}
	&p_{134}=p_{234}=1, \\
	&p_{136}=p_{235}=p_{246}=-p_{145}=e^{2t}, \\
	&h_{12}=\dfrac{3}{2}\dfrac{e^{4t}}{\sqrt{9+3e^{6t}}}, \\
	&h_{34}=-\dfrac{1}{3}\left(-3+\sqrt{9+3e^{6t}}\right)e^{-2t},\\
	&h_{35}=h_{36}=h_{46}=-h_{45}=1,\\
	&h_{56}=2e^{2t},
\end{align*} 
for each $t\in (-1,1)$,
and all other coefficients equal to zero. Then, by performing the change of variable
\[
\tilde{t}(t)\coloneqq \int_0^t a(s) {\text{d}}s, \quad a(s)=\sqrt{\frac{3}{2}}(9+3\,e^{6t})^{-\frac{1}4}e^{2t},
\]
one can easily check that the resulting pair $\left(\omega,\psi_+\right)$ defines a $G$-invariant balanced non-K\"ahler $\text{SU}(3)$-structure on the corresponding $M^{\text{princ}}$.
With respect to the $t$ parameter, the metric on $M^{\text{princ}}$ 
is represented by the matrix
\[
(g_{ij})=
\begin{pmatrix} 
	\dfrac{3}{2}\dfrac{e^{4t}}{\sqrt{9+3e^{6t}}} & 0 & 0 & 0 & 0 & 0 \\
	0 & \dfrac{3}{2}\dfrac{e^{4t}}{\sqrt{9+3e^{6t}}} & 0 & 0 & 0 & 0 \\
	0 & 0 &   \dfrac{3+ \sqrt{9+3e^{6t}}}{3 e^{2t}} & 0 & 1 & -1 \\
	0 & 0 & 0 & \dfrac{3+ \sqrt{9+3e^{6t}}}{3 e^{2t}} & 1 & 1 \\
	0 & 0 & 1 & 1 & 2e^{2t} & 0 \\
	0 & 0 & -1 & 1 & 0 & 2e^{2t}
\end{pmatrix}.
\]

However, using the results from \cite{Verdiani}, we can check that this example cannot be extended to the singular orbits to give a smooth metric on the whole manifold.
This concludes the proof of Theorem A.

\section{Proof of Theorem B}

We will finally prove our main theorem.
\begin{theoremB}
Let $M$ be a six-dimensional simply connected cohomogeneity one manifold under the almost effective action of a connected Lie group $G$, and let $K$ be the principal isotropy group. Assume $(\mathfrak{g}, \mathfrak{k}) \neq (\mathfrak{su}(2) \oplus \mathfrak{su}(2), \Delta \mathbb{R})$.
Then $M$ admits no $G$-invariant balanced non-K\"ahler $\text{\normalfont SU}(3)$-structures.
\end{theoremB}

By Theorem A, we only need to discuss if there exist balanced
non-K\"ahler $\text{\normalfont SU}(3)$-structures of cohomogeneity one arising as the compactification of the principal part determined by the pair $\left( \mathfrak{g}, \mathfrak{k} \right)=\left(\mathfrak{su}(2)\oplus 2\mathbb{R},\{0\} \right)$.
The question whether there is any such manifold with $(\mathfrak{g}, \mathfrak{k}) = (\mathfrak{su}(2) \oplus \mathfrak{su}(2), \Delta \mathbb{R})$ that admits a $G$-invariant balanced non-K\"ahler $\text{\normalfont SU}(3)$-structure remains open.

By \cite{Hoelscher2}, a six-dimensional compact simply connected cohomogeneity one manifold $M$ whose corresponding principal part is given by the pair $\left(\mathfrak{g},\mathfrak{k}\right)=\left(\mathfrak{su}(2)\oplus 2\mathbb{R},\{0\}\right)$ at the Lie algebra level is $G$-equivariantly diffeomorphic to the product of two three-dimensional spheres, i.e., $M\cong S^3\times S^3$.
If we denote by $H_i$ for $i=1,2$, the singular isotropy groups for the $G$-action on $M$ and by $\mathfrak{h}_i=\text{Lie}(H_i)$, for $i=1,2$, their Lie algebras, we have that both $\mathfrak{h}_1$ and $\mathfrak{h}_2$ are isomorphic to $\mathbb{R}$ so that both the singular orbits of $M$ are four-dimensional compact submanifolds of $M$.
By Michelsohn's obstruction \cite[Corollary 1.7]{Mic82}, if $M$ admitted any $4$-dimensional compact complex submanifold $S$, then $M$ would not admit a balanced metric. 
Therefore, we can make a few considerations by focusing on a tubular neighborhood of one singular orbit $G\supset H\supset K$ at a time. In particular, we divide the discussion depending on the immersion of $\mathfrak{h}\subset\mathfrak{g}$.
Let $S$ be the singular orbit given by the group diagram $G\supset H\supset K$. We notice that if $S$ is $J$-invariant, a complex structure on $M$ would give rise to a complex structure on $S$, so we can discard all these cases by Michelsohn's obstruction.
We have that $T_qM=T_qS \oplus V$ where $V=T_qS^{\perp}$ is the slice at $q\in S$; since $S$ is four-dimensional, $V$ is always a $2$-plane. We recall that the $H$-action on $T_qS$ is given by the adjoint representation while the $H$-action on $V$ is given via slice representation (which is determined from the embedding $K \subset H$), and since $V$ is two-dimensional, this action is just a rotation on $V$  of a certain weight, say $a$. 
Let us start by considering the case when $\mathfrak{h}$ is contained in the center of $\mathfrak{g}$, $\xi(\mathfrak{g})$.
In this case, the $H$-action on $T_qS$ is trivial.
Therefore, $T_qS$ and $V$ are inequivalent modules of the $H$-action on $T_qM$ and, since $J$ commutes with the $H$-action, $J$ preserves $T_qS$ for each $q\in S$, i.e., $S$ is an almost complex manifold and we may apply Michelsohn's obstruction to discard this case.
Therefore, we may suppose that $\mathfrak{h}$ has a non-trivial component in the $\mathfrak{su}(2)$-factor of $\mathfrak{g}$.
In particular, since $\text{rank}(\mathfrak{su}(2))=1$ and the adjoint action ignores components in the center of $\mathfrak{g}$, we will assume for our discussion, using the notation from Sect.\ \ref{Case (a.1)}, that $\mathfrak{h}=\left<f_1\right>$. 
Moreover, if we denote by $\mathfrak{m}$ the tangent space to $S$ via the usual identification, the decomposition of $\mathfrak{m}$ in irreducible $H$-modules is given by 
\[
\mathfrak{m}=l_0\oplus l_1,
\]
where $H$ acts on $l_0$ trivially and on $l_1$ via rotation of speed $d$.
The assumption $\mathfrak{h}=\left<f_1\right>$ does not change our discussion, since more generally if $h = \left< f_1 + X \right>$, where $X \in \mathbb{R} \oplus \mathbb{R} $, $ \{f_1 + X, f_2 , f_3 , f_4 , f_5 \}$ are again a basis for $\mathfrak{g}$ and $\mathfrak{m} \cong \left< f_2, f_3, f_4, f_5 \right>$ so the $H$-action on $\hat{\mathfrak{m}}|_q$ is again the adjoint representation $ad_\mathfrak{h}$ which splits $T_q S$ as sum of $l_0$ and $l_1$ as before. 
If the integer $a$ is different from $d$ the modules $l_0$, $l_1$ and $V$ are inequivalent for the $H$-action and again, since $J$ commutes with the $H$-action, it cannot exchange two different modules. In particular $J(T_qS)\subseteq T_qS$  and we may apply Michelsohn's obstruction as before.
For the remaining case $a=d$, we have that the two modules $l_1$ and $V$ are equivalent, hence, $J(l_1\oplus V)\subseteq l_1\oplus V$ but not necessarily $J(l_1)\subseteq l_1$. In particular, when this case occurs, the orbit $S$ is not $J$-invariant, and we do not have obstructions to the existence of balanced metrics.
Therefore, from now on, we assume this is the case. 

Let $\partial / \partial x$ be a vector field such that $\left( \xi|_q,  \partial / \partial x|_q \right)$ is an orthonormal basis for the slice $V$ and $T_q^*M = \langle e^1|_q, {\text{d}}x|_q, e^3|_q, e^4|_q, e^5|_q, e^6|_q \rangle$.
Let $\varphi : \mathfrak{h} \rightarrow \text{End}(T_qM)$ be the $\mathfrak{h}$-action on $T_qM$. Then in order to have $l_1$ and $V$ $\mathfrak{h}$-equivalent, $\varphi (f_1)^*$ acts on 1-forms given in the previous basis as 
\[
\varphi (f_1)^*=
\begin{pmatrix} 
	0 &{}\quad 1 &{}\quad 0 &{}\quad 0 &{}\quad 0 &{}\quad 0 \\
	-1 &{}\quad 0 &{}\quad 0 &{}\quad 0 &{}\quad 0 &{}\quad 0 \\
	0 &{}\quad 0 &{}\quad 0 &{}\quad 1 &{}\quad 0 &{}\quad 0 \\
	0 &{}\quad 0 &{}\quad -1 &{}\quad 0 &{}\quad 0 &{}\quad 0 \\
	0 &{}\quad 0 &{}\quad 0 &{}\quad 0 &{}\quad 0 &{}\quad 0 \\
	0 &{}\quad 0 &{}\quad 0 &{}\quad 0 &{}\quad 0 &{}\quad 0
\end{pmatrix}.
\]

Fix the volume form $\Omega = e^{1\ldots6}$ and consider 
the three form
$$
\begin{array}{ll}
	\psi_+ &:= p_1 e^{123} + p_2 e^{124} + p_3 e^{125} + p_4 e^{126} + p_5 e^{134} + p_6 e^{135} + p_7 e^{136} + p_8 e^{145}\\&{}\quad + p_9 e^{146} + p_{11} e^{234} + p_{12} e^{235} + p_{13} e^{236} + p_{14} e^{245} + p_{15} e^{246} +p_{16} e^{256}\\
	&{}\quad+ p_{17} e^{345} + p_{18} e^{346} + p_{19} e^{356} +p_{20} e^{456},
\end{array}
$$
where $p_j \in C^\infty((-1, 1))$ for any $j = 1, \dots , 20$.

The condition ${\text{d}} \psi_+=0$ is equivalent to the following ODE system:
\begin{equation}\label{eq:closure}
	\begin{cases}
		p_{11}'=0, \\
		p_{12}' +2 p_8=0, \\
		p_{13}' +2 p_9=0, \\
		p_{14}' -2 p_6=0, \\
		p_{15}' -2 p_7=0, \\
		p_{17}' +2 p_3=0, \\
		p_{18}' +2 p_4=0, \\
		p_{16}=p_{19}=p_{20}=0.
	\end{cases}
\end{equation}
From now on, we will assume $p_{16}=p_{19}=p_{20}=0$.

Let the slice be $V \cong \mathbb{R}^2$ so that the singular point $q \in \mathcal{O}_1$ is identified with $0 \in \mathbb{R}^2$, and let $r:V \rightarrow \mathbb{R}$ be the  radial distance, such that for $v=(v_1, v_2) \in V$, $r(v) =  | v|  = \sqrt{v_1^2 + v_2^2}$. Then $r \not\in C^\infty (V)$, and neither are the odd powers of $r$. Via the exponential map, we can identify $t+1$ with the radial distance $r$. 

Let $\alpha$ be a $G$-invariant 1-form on $M$. 
Then,
$$
\alpha(t)= \sum_{i=1}^6 \alpha_i(t) e^i,
$$
for $t \in (-1,1)$ and some smooth functions $\alpha_i$, $i=1, ..., 6$. This expression has to extend smoothly to $t=-1$. 
In particular, the Taylor expansion of $\alpha_k(t)$ around $t=-1$ for $k \geq 2 $ only has even powers of $t+1$:
$$
\alpha_k(t) \sim \sum_{n>1} a_{k, 2n} (t+1)^{2n}.
$$
Now for $2 \leq i < j<k \leq 6$ fixed,  the $e^{ijk}$-coefficients  extend smoothly to $t=-1$. Hence,
\[
p_{12}(t) \sim \sum_{n > 1} a_{2n} (t+1)^{2n},
\]
as well as for the Taylor expansion of $p_{13}(t), p_{14}(t)$ and $p_{15}(t)$ around $t=-1$.
Therefore,
$\lim_{t \rightarrow -1} p_{12}'(t)= \lim_{t \rightarrow -1} p_{13}'(t)= \lim_{t \rightarrow -1} p'_{14}(t)= \lim_{t \rightarrow -1} p'_{15}(t)=0$. 
From \ref{eq:closure}, we obtain that 
$\lim_{t \rightarrow -1} p_6(t)= \lim_{t \rightarrow -1} p_7(t)= \lim_{t \rightarrow -1} p_8(t)= \lim_{t \rightarrow -1} p_9(t)=0$. 

The three form $\psi_+$ at $t=0$ has to be $H$-invariant, and hence can be written as
$$
\begin{array}{ll}
\rho &=
c_3 e^1 \wedge {\text{d}}x \wedge e^5 +c_4 e^1 \wedge {\text{d}}x \wedge e^6 + c_6 e^{135} +c_7 e^{136} \\
&{}\quad+c_8 e^{145} + c_9 e^{146} - c_8 {\text{d}}x \wedge e^{35} -c_9 {\text{d}}x \wedge e^{36} \\
&{}\quad+c_6 {\text{d}}x \wedge e^{45} + c_7 {\text{d}}x \wedge e^{46} +c_{17} e^{345} +c_{18} e^{346},
\end{array}
$$
for some $c_3, c_4, c_6, c_7, c_8, c_9, c_{17}, c_{18} \in \mathbb{R}$. 
But $c_i = \lim_{t \rightarrow -1} p_i(t)=0$ for $i=6, 7, 8, 9$. Therefore, one can easily compute that
$$
\lambda |_{t=-1}= (c_{18} c_3 - c_{17} c_4)^2 
\geq0.
$$
This concludes case (a.1).

We note that it is possible to reach a contradiction by just studying the behavior around one of the singular orbits. However, if we do not use the information coming from Michelsohn's obstruction, the computations get significantly more complicated.
The main point is that from ${\text{d}} \psi_-=0$ and using the stability condition $\lambda <0$, we get $p_{10}=0$.
If we assume this too, the three form $\psi_+$ at $t=-1$ can be written as
$$
\begin{array}{ll}
	\rho &= c_1 e^{1} \wedge {\text{d}}x \wedge e^3 + c_2 e^{1} \wedge {\text{d}}x \wedge e^4 + c_3 e^{1} \wedge {\text{d}}x \wedge e^5 + c_4 e^{1} \wedge {\text{d}}x \wedge e^6 \\
	&{}\quad+ c_5 e^{134} + c_6 e^{135} + c_7 e^{136} + c_8 e^{145} + c_9 e^{146}  \\
	&{}\quad+ c_{11} {\text{d}}x \wedge e^{34} + c_{12} {\text{d}}x \wedge e^{35} + c_{13} {\text{d}}x \wedge e^{36} + c_{14} {\text{d}}x \wedge e^{45} + c_{15} {\text{d}}x \wedge e^{46} \\
	&{}\quad+ c_{17} e^{345} + c_{18} e^{346},
\end{array}
$$
for some $c_i \in \mathbb{R}$, $i=1,\ldots,18, i\neq 10,16$. 
Then, once again we find that $\lambda |_{t=-1}= (c_{18} c_3 - c_{17} c_4)^2 
\geq0$ which finishes the case.

\begin{remark} 
	We also note that in case (a.1) and when $\mathfrak{h}= \mathbb{R}$, we can remove the hypothesis of simply connectedness from the non-compact case and still get a non existence result.
	Let $M$ be a six-dimensional non-compact cohomogeneity one manifold under the almost effective action of a connected Lie group $G$ and let $K,H$ be the principal and singular isotropy groups, respectively, with $\left( \mathfrak{g}, \mathfrak{h}, \mathfrak{k} \right)=\left(\mathfrak{su}(2)\oplus 2\mathbb{R},\mathbb{R},\{0\} \right)$.
	Then, $M$ admits no $G$-invariant balanced non-K\"ahler $\text{\normalfont SU}(3)$-structures.
\end{remark}

From Theorem B, we get the following Corollary.
\begin{corollary}
There is no non-K\"ahler balanced $\normalfont\text{SU}(3)$-structure on $S^3 \times S^3$ which is invariant under a cohomogeneity one action.
\end{corollary}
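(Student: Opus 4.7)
The plan is straightforward: the corollary is an immediate specialization of Theorem B to the manifold $M = S^3 \times S^3$. First I would verify that $S^3 \times S^3$ satisfies the hypotheses of Theorem B. It is a smooth six-dimensional manifold, and being the product of two simply connected $3$-manifolds it is itself simply connected (e.g., by the K\"unneth-type formula $\pi_1(S^3\times S^3) \cong \pi_1(S^3) \times \pi_1(S^3) = \{1\}$). Hence any cohomogeneity one action of a connected Lie group $G$ on $S^3 \times S^3$ falls within the scope of Theorem B, and no $G$-invariant balanced non-K\"ahler $\text{SU}(3)$-structure can exist. This is exactly the content of the corollary.

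To give a more conceptual reading, I would highlight the link with the classification preceding Theorem A. By the discussion in Sect.~\ref{sec_balanced}, the only pair $(\mathfrak{g},\mathfrak{k})$ that a priori admits a $G$-invariant balanced non-K\"ahler $\text{SU}(3)$-structure on the principal part of a compact simply connected six-manifold of cohomogeneity one is $(\mathfrak{su}(2)\oplus 2\mathbb{R}, \{0\})$. By the reference to \cite{Hoelscher2} made at the start of Sect.~$5$, the corresponding compact simply connected cohomogeneity one manifold is $G$-equivariantly diffeomorphic precisely to $S^3 \times S^3$. Thus $S^3\times S^3$ is the only manifold for which the corollary has genuine content, and Theorem B disposes of it.

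Since all the work has already been carried out in Theorems A and B, there is no real obstacle in the proof of the corollary; the only thing to do is the invocation. If anything, the ``hard part'' is conceptual rather than technical, namely recognizing that the main analytic difficulty of Theorem B (the closure of $\psi_+$ combined with the smoothness of coefficients at the singular orbit forcing $\lambda\ge 0$) was introduced precisely to handle the $S^3 \times S^3$ case. Accordingly, my proof of the corollary would consist of one sentence: apply Theorem B with $M=S^3\times S^3$.
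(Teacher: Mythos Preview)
Your proposal is correct and matches the paper's own treatment: the paper gives no separate proof of the corollary, simply stating that it follows from Theorem B, and your verification that $S^3\times S^3$ is a six-dimensional simply connected manifold (so that Theorem B applies to any cohomogeneity one action on it) is exactly the right justification. Your additional contextual remarks about the classification and the role of case (a.1) are accurate and helpful but not strictly needed for the proof.
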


\begin{remark}
    In the non-simply-connected case, and under the same assumptions of Theorem B, by Theorem A, we can discard cases (b.1) and (c.3), as these do not admit local solutions to conditions \normalfont{(1)--(7)}. Moreover, as observed in \cite[Section 3]{Pod}, one can also rule out cases (b.3) and (c.2), as the $G$-action would not be almost effective, as well as case (c.1) since it would give rise to a three-dimensional $J$-invariant subspace, a contradiction.
\end{remark}

Data sharing is not applicable to this article as no datasets were generated or analyzed during the current study.

\begin{footnotesize}

\end{footnotesize}
\end{document}